\date{\today}
\theoremstyle{plain}
\newtheorem*{Thm}{Theorem}
\newtheorem*{Lem}{Lemma}
\newtheorem*{Prop}{Proposition}
\theoremstyle{definition}
\newtheorem*{Rmk}{Remark}
\newtheorem*{Ex}{Example}
\newcommand{\K}{\Bbbk}
\renewcommand{\a}{\alpha}
\renewcommand{\b}{\beta}
\renewcommand{\c}{\gamma}
\newcommand{\f}{\varphi}
\newcommand{\tg}{\mathfrak{t}}
\newcommand{\so}{\mathfrak{s}}
\newcommand*{\Cdot}{\raisebox{-0.65ex}{\scalebox{2}{$\cdot$}}}
\newcommand{\vtx}[1]{\underset{#1}{\Cdot}}
\newcommand{\Vtx}[1]{\overset{#1}{\Cdot}}
\newcommand{\Hom}[3]{{\rm Hom}_{#1}(#2,\,#3)}
\newcommand{\Ext}[4]{{\rm Ext}_{#1}^{#2}(#3,\,#4)}
\newcommand{\HH}[2]{{\rm HH}^{#1}(#2)}
\newcommand{\AP}[2]{\Gamma^{#1}_{#2}}
\renewcommand{\mod}{{\rm mod}-}
\newcommand{\E}[1]{\mathbf{E}(#1)}
\newcommand{\Cy}[1]{\mathbf{C}(#1)}
\newcommand{\sm}[1]{\begin{smallmatrix}#1\end{smallmatrix}}
\newcommand{\csr}[2]{_{#1}\hspace{-1mm}\circ_{#2}}
\numberwithin{equation}{subsection}
\title[Gerstenhaber bracket  and cycles.]{The Gerstenhaber bracket and cycles in the module category of a monomial quadratic algebra}
\author[J.C. Bustamante]{Juan Carlos Bustamante}
\address[J.C. Bustamante]{Mathematics Department. Champlain College. 2580 College St, Sherbrooke, Qu\'bec J1M 2K3}
\email{jcbustamante@crc-lennox.qc.ca}
\author[P. Le Meur]{Patrick Le Meur}
\address[Patrick Le Meur]{Universit\'e Paris Diderot, Sorbonne
  Universit\'e, CNRS, Institut de Math\'ematiques de Jussieu-Paris
  Rive Gauche, IMJ-PRG, F-75013, Paris, France}
\email{patrick.le-meur@imj-prg.fr} \subjclass[2000]{Primary 16E40,
  16G60} \thanks{The second named author acknowledges financial
  support from CRM (UMI CNRS 3457), from CRSNG and from Universit\'e de
  Sherbrooke.}
\keywords{Hochschild cohomology; Gerstenhaber bracket}
\begin{document}

\begin{abstract}
We establish a link between the Gerstenhaber bracket in the Hochshild
cohomology and the behaviour of cycles in the module category of a
monomial quadratic algebra $A$.
\end{abstract}

\maketitle
\section*{Introduction}

Given a finite-dimensional algebra $A$ over a field $\K$, its
Hochschild cohomology groups, denoted by $\HH{i}{A}$, are
$\HH{i}{A} = \Ext{A^e}{i}{A}{A}$ where $A^e = A \otimes_\K A^{op}$ is
the enveloping algebra of $A$. The natural equivalence between
$A-A$-bimodules and right $A^e$-modules allows to see the groups
$\HH{i}{A}$ as extension spaces of the $A-A$-bimodule by itself. The
groups corresponding to the lower degrees admit interpretations in
terms of the center, the outer derivations, as well as infinitesimal
deformations of the algebra $A$.

Besides this, the sum $\HH{*}{A} = \oplus_{i\geqslant 0} \HH{i}{A}$ is
endowed with a very rich structure, that of a Gerstenhaber
algebra. That is, it is a graded commutative ring (for the
cup-product) which is equipped with a Lie bracket(of degree $-1$) and
the two structures are compatible, the bracket inducing graded
derivations with respect to the cup-product. The two operations are
defined using the standard Hochschild complex of $A$, see the
Gerstenhaber's paper \cite{G63}.

While the cup-product coincides with the Yoneda splicing of
extensions, the Gerstenhaber bracket is harder to interpret. Schwede
provided in
\cite{S98}\ 
an interpretation based on homotopy classes in extension
categories. Besides this rather abstract description, some concrete
computations and results have been established in the last years in
particular cases : string algebras \cite{Bus06-cohomo, RR18},
radical-square zero algebras \cite{SF08}, or some group algebras
\cite{NW16-MR3498646, SF12}. The main difficulty to study the
Gerstenhaber bracket comes from the fact that the usual bar resolution
is normally too big to carry out actual concrete
computations. However, recent developments show that the bracket can
be treated using alternate approaches or other resolutions
\cite{NW16-MR3498646, V19-MR3974969, SA17-MR3623179,
  NVW-Arxiv1805.03167}.

Our work in the present paper, however, points in a different
direction, perhaps more "representation - theoretic". Indeed, we are
mainly interested in the information about $\mod{A}$ that can be
recovered from the Hochschild cohomology of $A$. There are number of
results concerning the vanishing of the groups corresponding to the
lower degrees (see \cite{Skow92}), but as far as we know, there are no
known results concerning the additional structure of $\HH{*}{A}$.

Our idea originates in section 5.4 of \cite{Hap89}, where it is
established that for representation-finite algebras the groups
$\HH{i}{A}$ for $i\geqslant 2$ detect cycles in the module category
$\mod{A}$. Let us now explain the idea while we describe the
structure and contents of the paper.

We focus on monomial quadratic algebras, a class for which a very
convenient minimal projective resolution of $_AA_A$ (due to Bardzell
\cite{B97}) and an explicit comparison morphism \cite{Bus06-cohomo,
  RR-18-comp} with the standard bar resolution are known. This makes
possible to carry the operations defined using the standard Hochschild
complex to the Bardzell complex. The required details are given in
section 1 where in addition we recall the notions concerning algebras
and modules that will be needed in the sequel.

The minimal resolution is parametrized by the extension groups between
the simple $A$-modules. These extensions are known to be, after
Bardzell \cite{B97}, completely described by the so-called
"\emph{associated sequences of paths}", that we call by
$\AP{}{}$-paths. In section 2 we introduce combinatorial and
representation theoretic material for describing the defining
composition operations "$\circ_i$" of the Gerstenhaber bracket. This
includes the concepts of $\Gamma$-bypasses and their composition as
well specific cycles in $\mod{A}$ - called admissible cycles -
associated to $\Gamma$-bypasses together with a certain composition of
these cycles. On one hand, the extension spaces between simple modules
have bases parametrised by $\Gamma$-paths and the cup product of these
extensions can be translated in terms of compositions of specific
$\Gamma$-bypasses. On the other hand, $\Gamma$-bypasses parametrize
cochains on Bardzell's resolution with values in $A$; we prove that
the "$\circ_i$" composition of these cochains is encoded by the
composition of the associated admissible cycles.

The results of this paper were presented at the {\sc XXII}
\emph{Coloquio Latinoamericano de Álgebra}, that took place in Quito,
Ecuador in 2017.

\section{Preliminaries}
\subsection{Algebras and modules}
While we briefly recall some particular concepts concerning bound
quiver algebras, we refer the reader to \cite{ASS06}, for instance,
for unexplained notions.

Let $Q=(Q_0,Q_1,\mathfrak{s},\mathfrak{t})$ be a finite quiver and $\Bbbk$ a commutative
field. We consider algebras of the form $A = \Bbbk Q / I$ with $I$ an
admissible ideal of the path algebra $\Bbbk Q$. This includes, but
does not restrict to, all basic connected algebras over algebraically
closed fields. The composition of two arrows $\a : i \to j$ and
$\b : j \to l$ is denoted by $\a \b$. A two-sided ideal $I$ of the
path algebra $\Bbbk Q$ is said to be \emph{monomial} if it can be
generated by a set of paths. If there is a set of generators formed by
linear combinations of paths of length 2, then $I$ is said to be
\emph{quadratic}.  We will mainly focus on monomial and quadratic
algebras, and by abuse of notation will identify a path $w$ in $\K Q$ with its class $w + I$ in $A$.

Given an arrow $\a:x\to y$ we denote by $\a^ {-1} : y\to x$ its formal
inverse, and agree that $\left(\a^ {-1}\right)^ {-1} = \a$. A
\emph{walk} in $Q$ is a composition $w = \a_1 \cdots \a_n$ of arrows
or formal inverses of arrows (so the target of $\a_i$ is the source of
$\a_{i+1}$ for $i\in\{1,\ldots, n-1\}$). A walk $w$ as above is
\emph{reduced} if $\a_{i-1} \ne \a_i^{-1}$ for $i\in\{2,\ldots, n\}$.
Let $A = \Bbbk Q/I$ with $I$ monomial. A reduced walk $w$ in $Q$ is a
\emph{string} in $(Q,I)$ if neither $w$ nor $w^{-1}$ contain a subpath
that is a path in $I$. A string is \emph{direct} if it is composed
entirely by arrows of $Q$ and \emph{inverse} if it is composed only by
formal inverses of arrows. The trivial walks are both inverse and
direct.

To a string $w = \a_1 \cdots \a_n$ is associated the so-called
\emph{string module} $M(w)$. If $w =e_x$ is the trivial walk at some
vertex $x$, then $M(w)$ is simply the corresponding simple module
$S_x$.  Let $w = \a_1 \ldots \a_n$ be a string with $n\geqslant 1$.
For $i\in{1,\ldots, n+1}$, define $K_i = \Bbbk$ and define $\f_{\a_i}$
as the identity map $K_i \to K_{i+1}$ if $\a_i$ is an arrow, and the
identity map $K_{i+1} \to K_i$ in case $\a_i^ {-1}$ is an arrow of
$Q$.

The $A$-module $M(w)$ (equivalently the representation of $(Q,I)$) is
defined as follows:
\begin{itemize}
\item For a vertex $a\in Q_0$, $M(w)_a$ is the sum of all the vector
  spaces $K_i$ such that the source of $\a_i$ is $a$, together with
  $K_{n+1}$ in case the target of $\a_n$ is $a$. In particular, if $w$
  does not pass trough $a$, then $M(w)_a = 0$.

 \item Given $\c\in Q_1$ and $\epsilon\in \{\pm 1\}$, if $\c^\epsilon$
   appears in $w$, then $M(w)_\c$ is the direct sum of the maps
   $\f_{\a_i}$ such that $\a_i = \c^\epsilon$. If $\c^\epsilon$ does not
   appear in $w$, them $M(w)_\c = 0$.
\end{itemize}

Given a string $u= \a_1 \cdots \a_n$ and a substring
$v = \a_i \cdots \a_j$ (for some $i,j$ with
$1\leqslant i < j \leqslant n$) we also have that :

there is a monomorphism

\[ M(v) \to M(u) \text{ if and only if } \left\{ \begin{array}{lcl}
                                                   \a_{j+1} \text{ is inverse} & \text{ or } & j=n\\
                                                   \text{and} & & \\
                                                   \a_{i-1} \text{ is
                                                   direct} & \text{
                                                             or} &i=1.
\end{array}
 \right.\]

And, dually, there is an epimorphism

 \[ M(u) \to M(v) \text{ if and only if } \left\{ \begin{array}{lcl}
                                                    \a_{j+1} \text{ is direct}  & \text{ or } & j=n\\
                                                    \text{and}
                                                                                &
                                                                                              & \\
                                                    \a_{i-1} \text{ is
                                                    inverse} & \text{
                                                               or }
                                                                                              &i=1.
 \end{array}
  \right.\]

  In particular, if $\a : i \to j$ is an arrow, we have a monomorphism
  $S_j \to M(\a)$ and an epimorphism $M(\a) \to S_i$.

  \subsection{Gerstenhaber bracket in Hochschild
    cohomology}\label{sec:reduced-bar}
  In \cite{G63} Gerstenhaber defined a Lie bracket $[ -, - ]$ on
  $\HH{*}{A}$. The original definition was given in terms of the
  complex obtained upon applying $\Hom{A^e}{-}{A}$ to the standard bar
  resolution of $_AA_A$.

In \cite{SF08} Sánchez - Flores showed that the bracket can be defined
using the so-called \emph{reduced resolution} (or \emph{radical
  resolution}, see \cite{GS-86, C89}). We now follow \cite{SF08}. Let
$A = \Bbbk Q/I$ and $E$ be the semi-simple subalgebra of $A$ generated
by $Q_0$, the vertices if $Q$. As $E-E$-bimodules, we have that
$A = E \oplus r$ (where $r$ is the Jacobson radical of $A$). In the
remaining part of this section all tensor products are taken over
$E$. Let $r^{\otimes_{\K}^n}$ denote the $n^{\rm th}$ tensor power of $r$
with itself. One then has a projective resolution of $_A A_A$
\[
\mathbf{R}_\bullet \colon \cdots \to A \otimes_{\K} r^{\otimes_{\K} n} \otimes_{\K} A
\xrightarrow{\delta_{n-1}} \cdots \xrightarrow{\delta_1} A \otimes_{\K} r
\otimes_{\K} A \xrightarrow{\delta_0} A \otimes_{\K} A \xrightarrow{\mu} A \to 0\,,
\]
where $\mu$ is the multiplication of $A$ and

\[ \delta_{n-1} = \sum_{j=0}^n (-1)^j \mathbf{1}^{\otimes j} \otimes
\mu \otimes \mathbf{1}^{\otimes (n-j)}.\]

In order to compute the Hochschild cohomology spaces we apply the
functor $\Hom{A^e}{-}{A}$ to $\mathbf{R}_\bullet$, use the
identification

\begin{equation}
\label{eq:id-rad}
  \Hom{A^e}{A\otimes_{\K} r^{\otimes_{\K}^n} \otimes_{\K}}{A} \simeq
  \Hom{E^e}{r^{\otimes_{\K}^n}}{A}  
  \end{equation}

and denote the right-hand term by by $(r^n,A)$. The identification
allows to carry de differential, still denoted by $\delta_{\bullet}$,
giving a complex that we will denote by $(\mathbf{r}^\bullet,A)$.

Let $\pi : A \to r$ be the canonical projection of $A$ onto its
radical. Note that it is an $E-E$ morphism.

For $f\in (r^n,A),\ g\in (r^m,A)$ and $i\in\{1,\ldots, n\}$ let
$f\circ_ig\in (r^{n+m-1},A)$ be defined by the formula
\[ f\circ_i g = f \left( \mathbf{1}^{\otimes{i-1}}\otimes \pi  g \otimes \mathbf{1}^{n-i}\right).\]
Further, let
\[ f\circ g = \sum_{i=1}^n (-1)^{(i-1)(m-1)} f\circ_i g\]
and finally \[ [f,g] = f\circ g - (-1)^{(n-1)(m-1)} g\circ f.\]

This operation induces the so-called Gerstenhaber bracket in
$\HH{*}{A}$.

\subsection{The minimal resolution} The reduced resolution is smaller
than the standard one, but still too big to be used efficiently for
our purpose.

From now on, we will be interested in monomial quadratic algebras, so
we can use the minimal resolution of Bardzell \cite{B97} which has the
following description. Let $\AP{0}{} = Q_0,\ \AP{1}{} = Q_1$ and for
$n\geqslant 2,\ \AP{n}{} = \{ \a_1 \cdots \a_n| \a_i \in Q_1, \a_i
\a_{i+1} \in I \text{ for } 1\leqslant i < n\}$.
The elements of some $\AP{n}{}$ will be called $\AP{}{}-paths$. If
$x,y\in Q_0$ are fixed, we will write $\AP{n}{x,y}$ for the set of
$\AP{}{}$-paths going from $x$ to $y$. We know from \cite{GZ94} that
for every natural number $n$ and every fixed vertices $x,y\in Q_0$,
the set $\AP{n}{x,y}$ is in bijection with a basis for
$\Ext{A}{n}{S_x}{S_y}$.

For $n\geqslant 0$, denote by $\Bbbk \AP{n}{}$ the $\Bbbk$-vector
space with basis $\AP{n}{}$. With these notations we have a minimal
projective resolution of $_AA_A$.

\[
\mathbf{M}_\bullet \colon \cdots\to A\otimes_{\K} \K \AP{n}{} \otimes_{\K} A
\xrightarrow{\partial_{n-1}} \cdots \xrightarrow{\partial_1} A\otimes_{\K}
\K \AP{1}{}\otimes_{\K} A \xrightarrow{\partial_0} A \otimes_{\K} \K
\AP{0}{}\otimes_{\K} A \xrightarrow{\epsilon} A \to 0
\]
where $\epsilon$ is the composition of the isomorphism $A\otimes_{\K}\Bbbk \AP{0}{}\otimes_{\K}A \simeq A\otimes_{\K} A$ with the  multiplication of $A$. The differentials are given by
\[
\partial_{n-1}(1\otimes \a_1 \cdots \a_n \otimes 1) = \a_1 \otimes \a_2 \cdots \a_n \otimes 1 + (-1)^n \otimes \a_1 \cdots \a_{n-1} \otimes \a_n.
\]

We then apply $\Hom{A^e}{-}{A}$ to $\mathbf{M}_\bullet$ and make the
identification

\begin{equation}
  \label{eq:id-min}
  \Hom{A^e}{A\otimes_{\K} \Bbbk \AP{n}{} \otimes_{\K} A}{A} \simeq
\Hom{E^e}{\Bbbk \AP{n}{}}{A}.   
\end{equation}

We denote the right-hand term by $(\AP{n}{},A)$, and the resulting
complex by $(\mathbf{\Gamma}^\bullet,A)$.

In order to define the Gerstenhaber bracket using the complex
$(\mathbf{\Gamma}^\bullet,A)$, an explicit comparison of complexes may
be used. This approach was used in \cite[1.3 and 1.4]{Bus06-cohomo},
and in \cite{RR18} for quadratic string algebras but can be
generalized to all monomial quadratic algebras.

The comparison morphisms


\[ \begin{tikzcd} \mu_\bullet \colon A\otimes_{\K} \Bbbk \AP{n}{} \otimes_{\K} A =
  \mathbf{M}_\bullet \ar[r, shift left] &
  \mathbf{R}_\bullet = A\otimes_{\K} r^{\otimes_{\K}^n }\otimes_{\K} A\ar[l, shift
  left] \colon \omega_\bullet
\end{tikzcd}
\]

are defined as follows (recall that $E = \Bbbk Q_0 = \Bbbk \AP{0}{}$
and keep in mind that all the tensor products involved are taken over
$E$):

\begin{itemize}
\item[$\bullet$] The map $\mu_\bullet$ is defined by :
\[\begin{array}{rcl}
\mu_0(1\otimes e_i \otimes 1) & = & 1\otimes e_i = e_i\otimes 1.\\
\mu_n(1\otimes \a_1 \cdots \a_n\otimes 1) & = & 1 \otimes \a_1\otimes \a_2 \otimes \cdots \otimes \a_n \otimes 1 \text{ for } n\geqslant 1.\\
\end{array}\]
\item[$\bullet$] The map $\omega_\bullet$ is defined by
  \begin{itemize}
  \item
  $\omega_0(1\otimes 1) = 1 \otimes \sum_{i\in Q_0} e_o\otimes 1$,
\item
  $\omega_1(1\otimes \a_1\cdots \a_n \otimes 1) = \sum_{i=1}^n \a_1
  \cdots \a_{i-1} \otimes \a_i \otimes \a_{i+1}\otimes \cdots\otimes
  \a_n$
  for all paths $\a_1\cdots \a_n$ ($\a_i\in Q_1$),
\item  and
  $\omega_n(1\otimes p \otimes 1)$ is equal to $u\otimes v \otimes w$
  if the path $p$ decomposes as $uvw$ where $u,v,w$ are paths such
  that $v\in \AP{n}{}$, $u$ being of minimal length and $0$ otherwise.
  \end{itemize}
\end{itemize}

The maps $\mu_\bullet$ and $\omega_\bullet$ are both morphisms of
complexes and moreover $\omega_\bullet \mu_\bullet = \mathbf{1}$. Upon
applying $\Hom{A^e}{-}{A}$ and making the identifications
(\ref{eq:id-min}) and (\ref{eq:id-rad}) mentioned above, we obtain two
quasi-isomorphisms


\[ \begin{tikzcd}-\circ \mu^\bullet \colon (\mathbf{r}^\bullet,A) \ar[r, shift left] & (\mathbf{\Gamma}^\bullet,A) \colon - \circ \omega^\bullet \ar[l, shift left].
\end{tikzcd}
\]

This allows to define the operations in $(\AP{\bullet},A)$ which
will be used to investigate the Gerstenhaber bracket. More precisely,
given $f\in(\AP{n}{},A),\ g\in (\AP{m}{},A)$, and
$i\in \{1,\ldots, n\}$, one defines
\[f \circ_i g = \mu^{n+m-1}(\omega^nf \circ_i \omega^m g) \]
where the symbol $\circ_i$ on the right-hand term denotes the
operation defined using the reduced bar resolution. From this point,
define a bracket, still denoted $[-,-]$ on $(\mathbf{\Gamma}^\bullet, A)$ on $(\mathbf{\Gamma}^\bullet, A)$ following the same steps as for $(\mathbf{r}^\bullet, A)$ (see section \ref{sec:reduced-bar}).

The crucial point is that given $f\in (\AP{n}{},A), g\in (\AP{m}{},A)$
and $w = \a_1 \cdots \a_{n+m-1}\in \AP{n+n-1}{}$ then

\[ f\circ_i g (w) = f(\a_1 \cdots \a_{i-1} g(\a_i\cdots
\a_{i+m-1})\a_{i+m}\cdots \a_{n+m-1})
\]
whenever
$\a_1 \cdots \a_{i-1} g(\a_i\cdots \a_{i+m-1})\a_{i+m}\cdots
\a_{n+m-1}\in \AP{n}{}$,
and $f\circ_i g (w)$ vanishes otherwise.  Note that
$\alpha_i\cdots\alpha_{i+m-1}\in \Gamma^m$.

\section{Operations on paths, extensions and cycles}
\subsection{Compositions of paths}
Let $(Q,I)$ be a bound quiver with $I$ monomial and quadratic.

Let $m,n$ be positive integers. Let $r,s$ be integers such that
$1 \leqslant s \leqslant r \leqslant n$. We define an \emph{$(s,r)$
  $\Gamma$-bypass} to be a pair $(u,v)$ where
$u = \alpha_1 \cdots \alpha_n \in \AP{n}{}$ ($\alpha_i\in Q_1$),
$v = \beta_1 \cdots \beta_m\in \AP{m}{}$ ($\beta_i\in Q_1$), such that
$\alpha_1\cdots \alpha_{s-1} \beta_1 \cdots \beta_m \alpha_r \cdots
\alpha_n \in \AP{s+m+n-r}{}$.
For such a $(u,v)$ we then define the \emph{composition of $u$ and $v$
  at positions $s$ and $r$} as
\[u \csr{s}{r}   v = \a_1 \cdots \a_{s-1} \b_1 \cdots \b_m\a_r \cdots \a_n.\]

Informally, we think as ``\emph{start with $u$, then switch to $v$ for
  the $s^{\text{th}}$ arrow, go along $v$, then switch back to $u$
  when its $r^{\text{th}}$ arrow is hit}''.

We define a \emph{$\Gamma$-bypass} as a couple $(u,v)$ which is an
$(s,r)$ $\Gamma$-bypass for some $s,r$.

A particularly interesting case in view of the study of the
Gerstenhaber bracket is when $u\csr{s}{r} v \in \AP{n+m-1}{}$, that is
precisely when $r=s+1$. On this situation we shall write
$u \circ_{s} v$ instead of $u \csr{s}{s+1} v$.

Note also that since $u$ and $v$ are allowed to contain cycles, the
endpoints of $v$ can appear more than once while running along
$u$. Several compositions are thus possible in general.

\begin{Ex}
Let us consider the quiver

\[
\begin{tikzcd}
1 \ar[r,  "a"] & 2\ar[r, "b"] & 3 \ar[r, "c"] \ar[dr, "f"'] & 4 \ar[r, "d"] & 5\ar[ll,"e"', bend right=60] \\
                &               &                           & 6 \ar[ur, "g"'] &
\end{tikzcd}\]
bound by all the monomial relations of length 2. Let $u=abcdecd \in \AP{7}{}$ and $v = fg \in \AP{3}{}$. We then have $u \csr{3}{5} v = abfgecd \in \AP{7}{},\ u \csr{3}{8} v = abfg \in \AP{4}{}, u \csr{6}{8} v = abcdefg \in \AP{7}{} $.
\end{Ex}

\subsection{Paths and extensions} Let $x,y$ be vertices. It is well
known that an arrow of $Q$ (equivalently a path in $\AP{1}{x,y}$)
$\a \colon x \to y$ defines an extension between the simple modules
$S_x$ and $S_y$; this extension is as follows (the morphisms are the
natural ones)
\[
\E{\a}\colon 0 \to S_y \to M(\a) \to S_x \to 0\,.
\]
This construction on paths in $\AP{1}{x,y}$ extends to a construction
on paths in $\AP{n}{x,y}$, for all integers $n$. For all
$u = \a_1 \cdots \a_n \in \AP{n}{x,y}$ ($\a_i\in Q_1$) denote by
$\E{u}$ the following exact sequence where all the arrows are the
natural morphisms
  \[
  \E{u}\colon 0 \to S_y \to  M(\a_n) \to  M(\a_{n-1}) \to  \cdots
  \to M(\a_1)\to  S_x \to  0\,.
\]

Recall that the vector space $\mathrm{Ext}^n_A(S_x,S_y)$ has several
equivalent definitions. One of them defines this space as the set of
appropriate equivalence classes $\overline{\mathbf{E}}$ of $n$-fold extensions
$\mathbf{E}\colon 0 \to S_y \to M_n\to\cdots \to M_1\to S_x\to 0$. Another equivalent point of view is to consider it as the cohomology group $H^n\mathrm{Hom}_A(P,S_y)$, where $P$ is
any projective resolution of $S_x$ in $\mod{A}$.  In this text it will
be convenient to deal with the former description. The following lemma
is part of the folklore and follows from \cite{GZ94}. We provide a proof
for convenience. In what follows, given a path $p$ in $Q$ we denote by $\so(p)$ and $\tg(p)$ its source and its target.

\begin{Lem}
  The family $\{\overline{\E{u}}\}_{u\in \AP{n}{x,y}}$ is a basis of
  $\Ext{A}{n}{S_x}{S_y}$.
\end{Lem}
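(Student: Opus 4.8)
The plan is to realise the classes $\ol{\E{u}}$ as the basis dual to Bardzell's minimal resolution, so that the statement becomes a matching of two bases. First I would fix $u=\a_1\cdots\a_n$ with $\a_i\in Q_1$ and set $x_0=x=\so(\a_1)$, $x_i=\tg(\a_i)$, so that $x_n=y$ and $\a_i\colon x_{i-1}\to x_i$. From \cite{GZ94} (equivalently, by restricting Bardzell's bimodule resolution $\mathbf{M}_\bullet$ to $S_x$) the minimal projective resolution $\mathbf{P}_\bullet\to S_x$ in $\mod A$ has $n$-th term $\mathbf{P}_n=\bigoplus_{w\in\AP{n}{x,\cdot}}P_{\tg(w)}$, where $P_z$ is the indecomposable projective with top $S_z$. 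Applying $\Hom{A}{-}{S_y}$ and using $\Hom{A}{P_z}{S_y}\cong\K$ for $z=y$ and $0$ otherwise, minimality of the resolution forces the differentials of the resulting complex to vanish; hence $\{\xi_w\}_{w\in\AP{n}{x,y}}$ is a basis of $\Ext{A}{n}{S_x}{S_y}=H^n\Hom{A}{\mathbf{P}_\bullet}{S_y}$, where $\xi_w\colon\mathbf{P}_n\to S_y$ projects onto the summand indexed by $w$ (necessarily $P_y$) and then onto its top. It therefore suffices to show that, under the standard identification of Yoneda $n$-extensions with $H^n\Hom{A}{\mathbf{P}_\bullet}{S_y}$, the class $\ol{\E{u}}$ is a nonzero scalar multiple of $\xi_u$.

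To do this I would build an explicit comparison chain map. Each $M(\a_i)$ is the length-two string module with top $S_{x_{i-1}}$ and socle $S_{x_i}$ (recall the mono $S_{x_i}\to M(\a_i)$ and epi $M(\a_i)\to S_{x_{i-1}}$ from the preliminaries), the internal maps of $\E{u}$ being the composites $M(\a_{i+1})\twoheadrightarrow S_{x_i}\hookrightarrow M(\a_i)$, and the outer maps being $S_y\hookrightarrow M(\a_n)$ and $M(\a_1)\twoheadrightarrow S_x$. Deleting $S_x$ turns $\E{u}$ into a complex $C_\bullet$ with $C_{i-1}=M(\a_i)$, augmented onto $S_x$, whose only nonzero homology is $S_y$ in top degree. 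I would then lift $\mathrm{id}_{S_x}$ to a chain map $f_\bullet\colon\mathbf{P}_\bullet\to C_\bullet$, starting from a projective cover $f_0\colon P_x\to M(\a_1)$ of the top and proceeding degree by degree by projectivity; the induced map $f_n\colon\mathbf{P}_n\to S_y$ is a cocycle representing $\ol{\E{u}}$.

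The heart of the argument is the top-degree bookkeeping, and I would organise it by induction on $n$. Setting $u'=\a_1\cdots\a_{n-1}\in\AP{n-1}{x,x_{n-1}}$ (an $\AP{}{}$-path because $u$ is one), the sequence $\E{u}$ is the Yoneda splice of $\E{u'}\in\Ext{A}{n-1}{S_x}{S_{x_{n-1}}}$ with $\E{\a_n}\in\Ext{A}{1}{S_{x_{n-1}}}{S_y}$, glued along $S_{x_{n-1}}$ through the map $M(\a_n)\twoheadrightarrow S_{x_{n-1}}\hookrightarrow M(\a_{n-1})$; thus $\ol{\E{u}}$ is the Yoneda product of $\ol{\E{\a_n}}$ and $\ol{\E{u'}}$. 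The base case $n=1$ is the standard identification of $\Ext{A}{1}{S_x}{S_y}$ with the arrows $x\to y$. For the inductive step I would track how the lift $f_\bullet$ passes from $\mathbf{P}_{n-1}$ to $\mathbf{P}_n$: the quadratic monomial hypothesis $\a_{n-1}\a_n\in I$ should guarantee that the unique generator of $\mathbf{P}_n$ whose image under $f_n$ reaches the socle $S_y$ is the one indexed by $u=u'\a_n$, every other generator being sent to $0$ since the corresponding concatenation of arrows either fails to be an $\AP{}{}$-path or does not factor through $S_{x_{n-1}}$. I expect this verification — that $f_n$ is supported on the single summand indexed by $u$, with nonzero coefficient — to be the main obstacle. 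Once it is in place, $u\mapsto\ol{\E{u}}$ sends $\AP{n}{x,y}$ bijectively onto nonzero multiples of the basis $\{\xi_u\}$, so $\{\ol{\E{u}}\}_{u\in\AP{n}{x,y}}$ is a basis of $\Ext{A}{n}{S_x}{S_y}$.
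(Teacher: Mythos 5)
Your proposal is correct and follows essentially the same route as the paper: both pass to the Gustafson--Zacharia/Bardzell minimal projective resolution of $S_x$ (whose $n$-th term is indexed by $\AP{n}{x,-}$), use minimality and simplicity of $S_y$ to identify $H^n\Hom{A}{\mathbf{P}_\bullet}{S_y}$ with the span of the dual cochains $\xi_w$, and exhibit $\ol{\E{u}}$ as the class of the projection onto the summand indexed by $u$ via a chain map from $\mathbf{P}_\bullet$ to the complex underlying $\E{u}$. The verification you flag as the main obstacle is exactly what the paper handles by writing that chain map down explicitly in every degree (each $f^{-\ell}$ is the projection onto the summand indexed by $\a_1\cdots\a_\ell$ followed by the natural surjection onto $M(\a_{\ell+1})$), and it is routine; your inductive organisation via Yoneda splicing is only a cosmetic variation.
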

\begin{proof} Using a projective resolution $P$ of $S_x$ due to
\cite{GZ94}, we will make explicit the cohomology class in
$H^n\mathrm{Hom}_A(P,S_y)$ corresponding to a given
$u=\a_1\cdots\a_n\in \AP{n}{x,y}$ ($\a_i\in Q_1$). The conclusion
will follow from this description.  Recall that $S_x$ has of
projective resolution of the following form
  \[ \cdots \to P^{-\ell} \xrightarrow{d^{-\ell}} P^{-\ell+1} \to
\cdots \to P^{-1}\to P^0 \xrightarrow{\pi} S_x\,,
  \] where
  \begin{itemize}
  \item $\displaystyle P^{-\ell} = \bigoplus_{p \in \AP{\ell}{x,-}} e_{\mathfrak{t}(p)}A$ for all
$\ell \geqslant 0$,
  \item $\pi$ is the natural projection,
  \item for all $\ell \geqslant 1$, the differential $d^{-\ell}$ is
such that $d(e_{\tg(\beta_1\cdots\beta_\ell)}) = \beta_\ell$ ($\in
e_{\tg(\beta_1\cdots\beta_{\ell-1})}A$) for all $\beta_1\cdots
\beta_\ell\in \AP\ell{x,-}$ ($\beta_i\in Q_1$).
\end{itemize}
In terms of Bardzell's resolution, this is $S_x\otimes_A \mathbf{M}$.
Now, here is a natural morphism of complexes determined by $\E{u}$
\[
\SelectTips{eu}{10}\xymatrix{
  P^{-n} \ar[d]^{f^{-n}} \ar[r]
& P^{-n+1} \ar[r]
\ar[d]^{f^{-n+1}}
& \cdots \ar[r]
& P^{-\ell} \ar[r]
\ar[d]^{f^{-\ell}}
& P^{-\ell+1} \ar[r] \ar[d]^{f^{-\ell+1}}
& \cdots
\ar[r]
& P^{-1} \ar[r] \ar[d]^{f^{-1}}
& P^0 \ar[d]^{f^0} \\ S_y
\ar[r]
& M(\a_n) \ar[r]
& \cdots \ar[r]
& M(\a_{\ell+1}) \ar[r]
&
M(\a_{\ell}) \ar[r]
& \cdots \ar[r]
& M(\a_2) \ar[r]
& M(\a_1)\,,
}
\]
where
\begin{itemize}
\item $f^{-n} \colon P^{-n}\to S_y$ is the following composition of
  natural morphisms
  \[
  P^{-n} = \bigoplus_{p\in \AP{n}{x,-}} e_\tg(p) A \twoheadrightarrow
  e_{\tg(\a_1\cdots\a_n)} A \twoheadrightarrow S_{\tg(\a_n)}\,,
  \]
\item for all $\ell \in \{0,1,\ldots,n-1\}$, the morphism $f^{-\ell}
  \colon P^{-\ell} \to M(\a_{\ell+1})$ is the following composition of
  natural morphisms
  \[
  P^{-\ell} = \bigoplus_{p\in \AP{\ell}{x,-}} e_{\tg(p)} A
  \twoheadrightarrow e_{\tg(\a_1\cdots \a_{\ell})}A = e_{\so(\a_{\ell+1})}A
  \twoheadrightarrow M(\a_{\ell+1})\,.
  \]
\end{itemize}
Under the identification of equivalence classes of $n$-fold extensions
$0\to S_y\to \cdots \to S_x\to 0$ and cohomology classes in
$H^n \Hom{A}{P}{S_y}$, the equivalence class  $\overline{\E{u}}$
corresponds to the cohomology class of $f^{-n}$.

In view of this reminder,
$\{\overline{\E{u}}\}_{u\in \AP{n}{x,y}}$ is a basis of
$\Ext{A}{n}{S_x}{S_y}$ because $S_y$ is simple.
\end{proof}

In addition to $n$ and $x,y$, let $m$ be an integer and $x',y'$ be
vertices of $Q$.
We now introduce an operation, denoted by $\csr{s\ }{r}$,
\[ \csr{s\ }{r} \colon \Ext{A}{n}{S_x}{S_y} \otimes_\K
\Ext{A}{m}{S_{x'}}{S_{y'}} \to \Ext{A}{n-r+s-m}{S_x}{S_y}\,,\]
for all integers $s,r$ such that
$1 \leqslant s \leqslant r \leqslant n$. It is defined by its
behaviour on tensors of the shape $\E{u}\otimes_\K \E{v}$
($u\in \AP{n}{x',y'}$ and $v\in \AP{m}{x,y}$) as follows
\[
\E{u} \otimes_\K \E{v} \longmapsto
\begin{cases}
  \E{u \csr{s\, }{r} v} & \text{if $(u,v)$ is an $(s,r)$
    $\Gamma$-bypass,} \\
  0 & \text{otherwise.}
\end{cases}
\]
Here is an alternative description when $(u,v)$ is an $(s,r)$
$\Gamma$-bypass. We use the following notation
\begin{itemize}
\item $u = \a_1\cdots\a_n$ ($\a_i\in Q_1$), $v=\b_1\cdots\b_m$
  ($\b_i\in Q_1$),
\item $u' = \a_1\cdots \a_{s-1}$ ($\in \AP{s-1}{}$), $\widehat{u} =
  \a_s \cdots \a_{r-1}$ ($\in \AP{r-s}{}$), and $u'' = \a_r \cdots
  \a_n$ ($\in \AP{n-r+1}{}$).
\end{itemize}
Hence $u = u'\widehat u u''$ and $u \csr{s\ }{r}v = u'vu''$. In terms
of long exact sequences, $\E u$ is obtained by splicing $\E{u'}$,
$\E{\widehat u}$, and $\E{u''}$:

\begin{center}
\begin{tikzcd}[column sep=0.9ex, row sep = 1.6ex]
0 \ar[rr]\arrow[rrrrrrrrdd, no head, shift left = 2ex, bend left, near start,
"\E{u''}" description, dotted]	&& S_n \ar[rr] 	& &  \cdots \ar[rr] & &
M(\a_r)\ar[rr]\arrow[dr]	&& M(\a_{r-1})\ar[rr]   & & \cdots \ar[rr] && M(\a_{s})
\ar[rr]\ar[dr]  &\ & M(\a_{s-1})\ar[rr]&&  \cdots \ar[rr] & &  S_1
\ar[rr] & & 0 \\
 				& & 		& &  & & &
S_{r-1}\ar[ur]\ar[dr]& & & & & & S_{s-1} \ar[ur]\ar[dr]\\
&&  	& &   & &
0\ar[ur]\ar[rrrrrrrr, no head, bend left = 60, shift left = 2.5ex, dotted,
"\E{\hat{u}}" description, ]	&& 0  & & && 0\ar[ur]\ar[rrrrrrrruu, no head,
bend left, shift left = 2.5ex, dotted, "\E{u'}" description, near end]
  & &0&   & &  &&
 & &  \\
\end{tikzcd}
\end{center}
In terms of cup-products, this means that the following equalities
hold true
\[
\begin{array}{rl}
  \E{u} = \E{u''} \cup \E{\hat{u}} \cup \E{u'} & \text{ in }
                                                \Ext{A}{n}{S_x}{S_y}
  \\
  \E{u} \csr{s}{r} \E{v} : = \E{u''}\cup \E{v} \cup \E{u'}
  & \text{ in } \Ext{A}{n-r+s-m}{S_x}{S_y}.
\end{array}
\]

\subsection{Cochains and cycles}\label{sec:cochains-cycles}
In the previous sections we considered $\AP{}{}$-paths and
extensions. We now turn our attention to the Bardzell complex, through
which the Hochschild cohomology spaces are computed.

A non-zero cochain $f : \K \AP{n}{x,y} \to A$ sends and $\AP{}{}$-path
$u$ to a linear combination of paths from $x$ to $y$. We will see that
with some mild additional hypotheses, this context allows to obtain
what we call a \emph{cycle in $\mod{A}$}, that is, a diagram in
$\mod{A}$ of the shape $X_0\to X_1\to \cdots \to X_N$ where each
morphism is non-zero and non invertible, each $X_i$ is indecomposable, and $X_N\simeq X_0$.

Let $u =\a_1 \cdots \a_n \in\AP{n}{x,y}$ and assume there is a
non-zero path $p \in e_x A e_y$. We thus have a non-zero cochain, in
the Bardzell complex : it sends $u$ to $p$ and every other
$\AP{}{}$-path to zero. We denote it by $\chi^p_u$. Since the set of
these cochains is a $\K$-basis for $\Hom{E^e}{\K \AP{n}{x,y}}{A}$,
we call them \emph{basic cochains}, and their \emph{degree} is
$n$. A basic cochain $\chi^p_u$ of degree $n$ is said to be
\emph{reduced} if $p$ and $u$ do not start with the same arrow
$\a_1$ and do not end with the same arrow $\a_n$.

\begin{equation}
  \label{eq:2}
  \begin{tikzcd} x\ar[r,"\a_1"] \ar[rrrr, bend right = 30,"p"']&1
    \ar[r,"\a_2"]&\cdots \ar[r,"\a_{n-1}"] &n-1 \ar[r,"\a_n"]&y.
  \end{tikzcd}
\end{equation}

\medskip

  Notice that if $p$ is a stationary path, then $x=y$ and $u$ is an
  oriented cycle in $Q$. The oriented cycle $u$ then yields the following cycle in $\mod{A}$ which happens to be exact


   \[\begin{tikzcd}
 M(\a_n) \ar[r] & M(\a_{n-1}) \ar[r] & \cdots \ar[r] & M(\a_1) \ar[r]   & M(\a_n).
   \end{tikzcd}\]

Moreover, the kernels and cokernels of the morphisms appearing in this
cycle are precisely the simple modules corresponding to the sources
and targets of the arrows $\a_i$ for $i\in\{1,\ldots, n\}$ so that $\E{u}$ can be easily recovered from the given oriented cycle. The same holds for the extensions $\E{\a_i \cdots \a_n \a_1 \cdots \a_{i-1}}$  corresponding to the cycles obtained from $u$ by cyclic permutation.

Note also
that this holds regardless if $A$ is of finite representation type or
not. The cycle mentioned above is a cycle in $\mod{A}$, not
necessarily a cycle in the Auslander - Reiten quiver of $A$.

\medskip

In general it is still possible to associate a cycle in $\mod A$ to a
basic and reduced cochain. The construction is detailed below. The
resulting cycle has a specific shape and we give a name to it for
later purposes. We define an \emph{admissible cycle of degree $n$} in
$\mod A$ as a cycle of the following shape


\begin{equation}
  \label{eq:cyc-adm}
M(\a_{n-1}) \to M(\a_{n-2}) \to \cdots \to M(\a_2) \to
M(\a_1^{-1} p \a_n^{-1}) \to M(\a_{n-1})\,,
\end{equation}

where
\begin{itemize}
\item $\a_1\cdots\a_n$ is a path in $Q$ (with $\a_i\in Q_1$) with length $n$ at least $2$.
\item $p$ is a path parallel to $\a_1\cdots\a_n$ such that $\a_1$ is
  not a prefix of $p$ and $\a_n$ is not a suffix of $p$ and such that
  $p$ is non-zero in $A$, in particular the module
  $M(\a_1^{-1}p\a_n^{-1})$ is well-defined. This ensures that the arrows $\a_1,\ldots,\a_n$ and the path $p$ hence form a diagram the one shown in \eqref{eq:2}, above, from which we adopt the numbering of the vertices.

\item all the morphisms are the natural ones, note that
  \begin{itemize}
  \item $S_1$ is a direct summand of
    $\mathrm{soc}(M(\a_1^{-1} p \a_n^{-1}))$ and the map $M(\a_2) \to
    M(\a_1^{-1} p \a_n^{-1})$ is the composite morphism $M(\a_2)
    \twoheadrightarrow S_1 \hookrightarrow M(\a_1^{-1} p
    \a_n^{-1})$,
  \item $S_{n-1}$ is a direct summand of
    $\mathrm{top}(M(\a_1^{-1} p \a_n^{-1}))$ and the map $M(\a_1^{-1} p
    \a_n^{-1}) \to M(\a_{n-1})$ is the composite morphism $M(\a_1^{-1} p \a_n^{-1}) \twoheadrightarrow S_{n-1} \hookrightarrow M(\a_{n-1})$.
  \end{itemize}
\end{itemize}


Note that both the sequence of arrows $\a_1,\ldots,\a_n$ and the path
$p$ may be recovered from the admissible cycle. First, the sequence of two-dimensional modules $M(\a_{n-1}),M(\a_{n-2}),\ldots,M(\a_2)$ determine the arrows. Next,
$M(\a_1^{-1}p\a_n^{-1})$ is the only non uniserial module of the cycle  and it determines the modules $M(\a_1),\ M(\a_n)$ and $M(p)$ (hence the arrows $\a_1, \a_p$ and the path $p$):

\begin{itemize}
\item $M(\a_1)$ is the unique quotient of $M(\a_1^{-1}p\a_n^{-1})$ of
  the shape $M(p')$ for some path $p'$, and such that the socle and
  the top of $M(p')$ are direct summands of the socle and the top,
  respectively, of $M(\a_1^{-1}p\a_n^{-1})$,
\item $M(\a_n)$ is the unique submodule of
  $M(\a_1^{-1}p\a_n^{-1})$ of the shape $M(p')$ for some
  path $p'$, and such that the socle and the top of $M(p')$
  are direct summands of the socle and the top, respectively, of
  $M(\a_1^{-1}p\a_n^{-1})$,

\item finally $M(p)$ is the unique non zero cohomology group of
  (\ref{eq:cyc-adm}).
\end{itemize}
Thus, the modules $M(\a_1),\ldots,M(\a_n),M(p)$, and hence the of
arrows $\a_1,\ldots,\a_n$ and the path $p$ are determined by the
admissible cycle.
\bigskip

Now let $\chi^p_u$ be a basic and reduced cochain ($u=\a_1\cdots\a_n$,
$\a_i\in Q_1$) of degree at least two, and keep in mind the labelling of the vertices and arrows of \eqref{eq:2}. It follows from the definition
that the following diagram, where all the arrows are the natural
morphisms and which we will denote by $\Cy{u,p}$, is an admissible
cycle

%
%

\[\Cy{u,p} \colon  M(\a_{n-1}) \to M(\a_{n-2}) \to \cdots \to M(\a_2) \to
M(\a_1^{-1} p \a_n^{-1}) \to M(\a_{n-1})\,.
\]

Note that this sequence is exact except at
$M(\a_1^{-1}p\a_{n-1}^{-1})$. Also, the kernel of the map with domain
$M(\a_j)$ is the simple module $S_j$ for $j\in\{2,\ldots, n-1\}$ and
$S_1$ is the image of $M(\a_2)\to M(\a_1^{-1}p\a_n^{-1})$. It follows
from this remark that the admissible cycle $\Cy{u,p}$ alows to obtain cycles starting and ending at each of theses simple modules.

On the other hand, in case $n=2$ we obtain an admissible
cycle that is reduced to one single module $M(\a_1^{-1} p \a_2^{-1})$
with a non invertible non trivial endomorphism (see the example
below).

The cycle $\Cy{u,p}$ may be recovered using homological algebra as
follows. Denote $\a_1\cdots\a_{n-1}$ by $u'$, this is the only suffix
of $u$ lying in $\AP{n-1}{}$; its associated $(n-1)$-fold extension is
\[
\E{u'} \colon 0 \to S_{n-1} \to M(\a_{n-1}) \to M(\a_{n-2})\to
\cdots \to M(\a_1)\to S_x\to 0\,.
\]

The module $M(p\a_n^{-1})$ is well-defined because $p$ does not end
with $\a_n$.  Moreover, its top contains $S_x$ as a direct summand because $p$ starts at $x$. Denote by $\pi_x$ the natural surjection
$M(p\a_n^{-1}) \twoheadrightarrow S_x$. Considering the pullback of
$\E{u'}$ along $\pi_x$ yields the following commutative diagram whose
rows are exact


\[\begin{tikzcd}[column sep = small]
 		& 0  \ar[r] 	&S_{n-1} \ar[r]\ar[d, equal] &M(\a_{n-1}) \ar[r]\ar[d, equal] & \cdots \ar[r]& M(\a_2)\ar[d, equal] \ar[r] & M(\a_1^{-1} p \a_n^{-1}) \ar[r]\ar[d] & M(p \a_n^{-1}) \ar[r]\ar[d,"\pi_x"] & 0   \\
\E{u'}: &      0 \ar[r] &S_{n-1} \ar[r] & M(\a_{n-1}) \ar[r]& \cdots \ar[r]& M(\a_2) \ar[r] & M(\a_1)\ar[r] & S_x \ar[r] & 0.
\end{tikzcd} \]

Note that $S_{n-1}$ is also a direct summand of
$\mathrm{top}\ M(p\a_n^{-1})$. Now, keeping the first row in the above pullback
diagram and composing the rightmost non-zero morphism with the natural surjection $M(p\a_n^{-1}) \twoheadrightarrow S_{n-1}$ and the natural inclusion $S_{n-1}\hookrightarrow M(\a_{n-1})$ yields the cycle $\Cy{u,p}$.

\medskip

In our homological construction we started by considering $u' = \a_1 \cdots \a_{n-1}$, but could also start with $u'' = \a_2 \cdots \a_n$, the unique suffix of $u$ belonging to $\AP{n-1}{}$, starting at the vertex $1$, the target of $\a_1$,  then consider its associated extension $\E{u''}$ and the pushout with the map $S_{1} \to M(\a_1^{-1}p)$. This leads, again, to consider the module $M(\a_1^{-1}p \a_n^{-1})$. The cycle is then completed with the map $S_y \to M(\a_1^{-1} p)$, and we are led to

\[\begin{tikzcd}[column sep = small]
 		M(\a_2) \ar[r] & M(\a_1^{-1}p \a_n^{-1})\ar[r] &M(\a_{n-1}) \ar[r]& \cdots \ar[r]& M(\a_2).
\end{tikzcd}
\]

The cycle obtained in this way is \emph{``rotated''}, but it uses the same maps and modules. 


\begin{Ex} \
Let $A = \K Q / I$ where $Q$ is the quiver
\begin{center}
\begin{tikzcd}[column sep=4mm, row sep = 2mm]
 & 		&\Vtx{3}\ar[dr,"\a_3"] & &\\
 & \Vtx{2} \ar[ur, "\a_2"] \ar[rr, "\b"]	& 			&\Vtx{4}
\ar[dr, "\a_4"] & \\
\vtx{1} \ar[ur, "\a_1"] \ar[rrrr, "\gamma" below] & & & &\vtx{5}
 \end{tikzcd}
\end{center}
and $I$ is the ideal generated by the paths of length 2. The algebra $A$ is then a representation - finite string algebra, and one can compute

\[ \dim{\HH{i}{A}} = \begin{cases}
  1 & \text{ if } i\in \{0,2,3,4\}\\
  2 & \text{ if } i = 1,\\
  0 & \text{ otherwise.}
\end{cases}\]

The generators of the Hochschild spaces are given by (the classes of)
the basic cochains $\chi^\b_\b$ and $\chi^\c_\c$ for $i=1$;
$f^2 = \chi^{\b}_{\a_2 \a_3}$ for $i=2$,
$f^3 = \chi^{\c}_{\a_1 \b \a_4}$ for $i=3$ and
$f^4 = \chi^{\c}_{\a_2\a_2 \a_3\a_4}$ for $i=4$. 

The Auslander-Reiten quiver of $A$ is depicted below. We indicate the
modules by their composition factors and the dotted lines are the
Asulander-Reiten translation.

 \begin{center}
 \begin{tikzpicture}[node distance = 1.5cm,<-]
 \node 				(s2) {$\sm{2}$};

 \node[below right of = s2] (p1) {$\sm{\ 1 \ \\ 25}$} edge (s2);
 \node[below left of = p1] (s5) {$\sm{5}$} edge[->] (p1); \node[below
 right of = s5] (p4) {$\sm{4\\5}$} edge (s5);

 \node[above right of = p1] (p15) {$\sm{1\\5}$} edge (p1) edge[-,
 dotted] (s2); \node[below right of = p1] (M) {$\sm{\ \ 1\ 4\\2\ 5}$}
 edge (p1) edge (p4) edge[-,dotted] (s5);

 \node[below right of = p15] (i5) {$\sm{1\ 4\\ 5}$} edge(p15) edge (M)
 edge[-,dotted] (p1); \node[below right of = M] (i2) {$\sm{1\\2}$} edge
 (M);

 \node[above right of = i5] (s4) {$\sm{4}$} edge (i5) edge[-, dotted]
 (p15); \node[above right of = s4] (p2) {$\sm{\ 2\\ 4\ 3}$} edge (s4);
 \node[above left of = p2] (s3) {$\sm{3}$} edge[->] (p2); \node[above
 right of = p2] (ts3) {$\sm{2\\ 4}$} edge(p2) edge[-,dotted] (s3) ;

 \node[below right of = ts3] (tp2) {$\sm{2\ 3\\ 4}$} edge
 (ts3)edge[-,dotted] (p2) ; \node[above right of= tp2] (Cs3) {$\sm{3}$}
 edge(tp2) edge[-,dotted] (ts3) ;

 \node[below right of = p2] (ts4) {$\sm{3\ 2\\\ 4\ 3}$} edge(p2)
 edge[->] (tp2) edge[-,dotted] (s4); \node[below right of = s4] (m34)
 {$\sm{3\\4}$} edge(s4) edge[->](ts4);

 \node[below right of = i5] (s1) {$\sm{1}$} edge (i5) edge (i2) edge[-,
 dotted] (M);

 \node[below right of = ts4] (i3) {$\sm{2\\3}$} edge (ts4)
 edge[-,dotted] (m34) ; \node[above right of = i3] (Cs2) {$\sm{2}$}
 edge(i3) edge(tp2) edge[-,dotted] (ts4);



 -1mm]Cs3.south);

 \end{tikzpicture}
 \end{center}

Further, the admissible cycles corresponding to these generators are given below.  Note that the cycle corresponding to $f^2 = \chi^\b_{\a_2\a_3}$ is reduced to a single module. We indicate, in each case, the admissible cycle associated with each cochain, as well as a corresponding cycle starting and ending at a simple module.


\begin{equation}
\label{eq:cyclesAR}
\begin{tikzcd}[row sep = 0.1ex]
\Cy{\b, \a_2\a_3} : & & \sm{3\ 2\\\ 4\ 3}&  &\\
\text{Cycle at }S_3 :  & 	S_3 \ar[r] &\sm{3\ 2\\\ 4\ 3} \ar[r] &S_3 & \\
 \ 					& & & &\\
\Cy{a_1 \b \a_4, \c} :&  &\sm{2\\4}\ar[r] & \sm{1\ 4\\\ 2\ 5} \ar[r] &\sm{2\\4} &\\
\text{Cycle at }S_4 :& S_4 \ar[r]&\sm{2\\4}\ar[r] & \sm{1\ 4\\\ 2\ 5} \ar[r] & S_4\\
 \ 					& & & &\\
\Cy{\a_1\a_2\a_3\a_4,\c} :& &\sm{3\\4}\ar[r] & \sm{2\\3}\ar[r] &\sm{1\ 4\\\ 2\ 5} \ar[r] &\sm{3\\4}\\
\text{Cycle at }S_4 : &S_4 \ar[r] &\sm{3\\4}\ar[r] & \sm{2\\3}\ar[r] &\sm{1\ 4\\\ 2\ 5} \ar[r] &S_4.\\
\end{tikzcd}
\end{equation}
\end{Ex}

\medskip

Clearly this construction does not apply to non-reduced basic
cochains. When $\chi_u^p$ is not reduced one may try to associate a
cycle to it by performing the above construction to $\chi_v^q$ where
$(v,q)$ is obtained from $(u,p)$ by removing from $u$ and $p$ their
largest common suffix and prefix. However, this is not always
possible because the resulting reduced basic cochain $\chi_v^q$ may
have degree smaller than $2$.
\begin{Ex}
Let $Q$ be the quiver
\[
\begin{tikzcd}[row sep = small]
 1 \ar[dr, "a_1"] & & \\
 		 & 2 \ar[r, bend left, "a_2"] \ar[r, bend right, "b"']& 3\\
 1' \ar[ur, "a'_1"'] & &
\end{tikzcd}
\]
bound by $I= \langle a_1 a_2, a'_1 a_2 \rangle$. The cochain
$\chi^{a_1 b}_{a_1a_2}$ is not reduced. Removing from $a_1b$ and
$a_1a_2$ their largest common prefix and suffix yields the reduced
cochain $\chi^{b}_{a_2}$. The latter has degree $1$. Hence, the
previous construction of admissible cycles does not apply to it.
\end{Ex}

We now turn our attention to the compositions of $\AP{}{}$-paths, and
how these are reflected by the cycle assignment, and to the
compositions of cochains that give rise to the Gerstenhaber bracket.

\subsection{Compositions of cochains}

Let $\chi^p_u$ and $\chi^q_v$ be two basic cochains with
$u=\a_1\cdots \a_n \in \AP{n}{}$ and $v=\b_1\cdots \b_m\in \AP{m}{}\ (\a_i, \b_j \in Q_1)$.
Saying that the cochain $\chi^p_u \circ_s \chi^q_v$ is non-zero
amounts to say that there is a path
$\c_1 \cdots \c_{n+m-1}\in \AP{n+m-1}{}$ such that
\[\chi^p_u \circ_s \chi^q_v (\c_1 \cdots \c_{n+m-1}) \ne 0.\]

Since the only non-zero value that $\chi^p_u$ takes on $\AP{}{}$-paths
is $p$, this means that
\[ \chi^p_u(\c_1 \cdots \c_s \chi^q_v(\c_s\cdots
\c_{s+m-1})\c_{s+m}\cdots \c_{n+m-1}) = p\]
and this forces $\c_1 \cdots \c_s q\c_{s+m}\cdots \c_{n+m-1} = u$ so
$(u,v)$ is a $\AP{}{}$-bypass, and further
\[\chi^p_u \circ_s \chi^q_v = \chi^p_{u\circ_s v}.\]

We thus have a situation like the one illustrated in \eqref{eq:1}.

\medskip


\begin{equation}
  \label{eq:1}
 \begin{tikzcd}
	      & & & \ar[r, bend left = 60, dotted, no head]           & \ar[d, "\b_{m-1}"]    & &\\
	      & & & b\ar[u,"\b_2"]           &c \ar[d, "\b_m"]    & &\\
  x \ar[r, "\a_1"] \ar[rrrrrr, bend right=25, "p"']&1\ar[r] & \cdots \ar[r, "\a_{s-1}"]& s-1 \ar[r, "\a_s = q"] \ar[u,"\b_1"]&s  \ar[r, "\a_{s+1}"]& \cdots \ar[r, "\a_n"] & y
   \end{tikzcd}
\end{equation}

\medskip

It follows from the linearity in the definition of the bracket at the cochain level that:

\begin{Prop} Let $\chi_u^p$ and $\chi_v^q$ be basic cochains of  degrees $n$ and $m$, respectively. Let $s$ be an integer. Then
  \[
  \chi_u^p {\csr{\ }{s}} \chi_v^q =
  \begin{cases}
    \chi_{u{\csr{\ }{s}}v}^p & \text{if $(u,v)$ is an $(s,s+1)$
      $\Gamma$-bypass}\\
    0 & \text{otherwise.}
  \end{cases}
  \]
\end{Prop}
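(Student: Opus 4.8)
The plan is to obtain the cochain $\chi_u^p \circ_s \chi_v^q$ by evaluating it on the basis $\AP{n+m-1}{}$ of $\K\AP{n+m-1}{}$, using the explicit formula for the operation $\circ_s$ on the Bardzell complex recalled above. Since that operation is $\K$-bilinear, it suffices to treat the two basic cochains of the statement and to record, for each $\AP{}{}$-path $w=\c_1\cdots\c_{n+m-1}\in\AP{n+m-1}{}$ ($\c_i\in Q_1$), the element $\chi_u^p\circ_s\chi_v^q(w)$ of $A$. The configuration to keep in mind throughout is the one displayed in \eqref{eq:1}.

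First I would substitute $w$ into that formula, which reads $\chi_u^p\circ_s\chi_v^q(w)=\chi_u^p\!\left(\c_1\cdots\c_{s-1}\,\chi_v^q(\c_s\cdots\c_{s+m-1})\,\c_{s+m}\cdots\c_{n+m-1}\right)$ whenever the argument of $\chi_u^p$ lies in $\AP{n}{}$, the value being $0$ otherwise. Because $\chi_v^q$ vanishes on every $\AP{}{}$-path other than $v$, the inner factor is nonzero only if $\c_s\cdots\c_{s+m-1}=v=\b_1\cdots\b_m$, in which case it equals $q$; I would then be reduced to deciding when $\chi_u^p$ is nonzero on the product $\c_1\cdots\c_{s-1}\,q\,\c_{s+m}\cdots\c_{n+m-1}$ in $A$, which happens precisely when this product equals $u$.

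The key step is to unwind this last equality, and here the monomial hypothesis on $A$ is what makes everything rigid: nonzero elements of $A$ are single paths, so an equality of paths in $A$ is an equality of sequences of arrows. Comparing the two sides of $\c_1\cdots\c_{s-1}\,q\,\c_{s+m}\cdots\c_{n+m-1}=\a_1\cdots\a_n$ forces $q$ to be the single arrow $\a_s$, together with $\c_i=\a_i$ for $i<s$ and $\c_{s+m-1+j}=\a_{s+j}$ for $1\le j\le n-s$. Combined with $\c_s\cdots\c_{s+m-1}=v$, this pins down $w$ uniquely as $w=\a_1\cdots\a_{s-1}\b_1\cdots\b_m\a_{s+1}\cdots\a_n=u\circ_s v$. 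Hence a path $w$ producing a nonzero value exists in $\AP{n+m-1}{}$ if and only if $u\circ_s v$ itself lies in $\AP{n+m-1}{}$, which is exactly the requirement that $(u,v)$ be an $(s,s+1)$ $\Gamma$-bypass; the same computation records that a nonzero value moreover demands $q=\a_s$, i.e. that the value inserted by $\chi_v^q$ be the arrow of $u$ that $v$ replaces.

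Assembling the cases then yields the statement. When $(u,v)$ is an $(s,s+1)$ $\Gamma$-bypass, the cochain $\chi_u^p\circ_s\chi_v^q$ takes the value $p$ on $w=u\circ_s v$ and vanishes on all other elements of $\AP{n+m-1}{}$, so it coincides with the basic cochain $\chi_{u\circ_s v}^p$; otherwise no $w$ gives a nonzero value and the composition is $0$. I expect the only real care to be needed in the index bookkeeping of the substitution and in the passage, via the monomial structure of $A$, from an identity of elements of $A$ to an identity of arrow-sequences, which is precisely what legitimises reading off both $w=u\circ_s v$ and the auxiliary condition $q=\a_s$.
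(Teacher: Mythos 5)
Your argument is correct and follows essentially the same route as the paper: evaluate $\chi_u^p\circ_s\chi_v^q$ on $\AP{}{}$-paths $\c_1\cdots\c_{n+m-1}$, observe that a nonzero value forces $\c_s\cdots\c_{s+m-1}=v$ and $\c_1\cdots\c_{s-1}\,q\,\c_{s+m}\cdots\c_{n+m-1}=u$, and conclude that this happens exactly when $(u,v)$ is an $(s,s+1)$ $\Gamma$-bypass, with the composite equal to $\chi_{u\circ_s v}^p$. Your explicit use of monomiality to turn the equality in $A$ into an equality of arrow sequences (forcing $q=\a_s$ and $w=u\circ_s v$) just spells out what the paper leaves implicit.
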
\qed

\subsection{Compositions of cycles}

We now turn our attention to cycles in $\mod{A}$. We will define 
operations between admissible cycles called \emph{compositions} (at some specific place) and we
will see that for reduced basic cochains $\chi_u^p$ and $\chi_v^q$ of
degree at least two and such that $(u,v)$ is an $(s,s+1)$
$\Gamma$-bypass, the composition of the admissible cycles $\Cy{u,p}$
and $\Cy{v,q}$ is the cycle $\Cy{u\circ_s v,p}$.

\medskip

Consider two admissible cycles
\[
\begin{array}{ll}
\mathbf{C}\colon &  M(\a_{n-1}) \to \cdots \to M(\a_2) \to
M(\a_1^{-1} p \a_n^{-1}) \to M(\a_{n-1}) \\
\mathbf{C}'\colon &   M(\b_{m-1}) \to \cdots \to M(\b_2) \to M(\b_1^{-1} q \b_m^{-1}) \to M(\b_{m-1})
\end{array}
\]
and assume there is an integer $s\in \{1,\ldots, n\}$ satisfying :
\begin{enumerate}[(a)]
\item $\b_1\cdots\b_m$ is parallel to $\a_s$
\item $q=\a_s$.
\end{enumerate}
Recall from subsection~\ref{sec:cochains-cycles} that the admissible
cycles $\mathbf{C}$ and $\mathbf{C}'$ together with $s$ determine
$\b_1\cdots\b_m$, $\a_s$ and $q$, hence (a) and (b) are indeed assumptions
on the admissible cycles $\mathbf C$ and $\mathbf{C}'$. We hence have
the diagram \eqref{eq:1}

The definition of the \emph{$s$-th composition} of $\mathbf{C}$ with
$\mathbf{C}'$, denoted by $\mathbf{C}\circ_s \mathbf{C}'$, is given
below according to whether $s=1$, $1<s<n$, or $s=n$.  Recall from
subsection~\ref{sec:cochains-cycles} that the cycles $\mathbf{C}$ and
$\mathbf{C}'$ determine the sequences of modules
$M(\a_1),\ldots,M(\a_n),M(p)$ and $M(\b_1),\ldots,M(\b_m),M(q)$. In
particular, $\mathbf{C}\circ_s\mathbf{C}'$ depends on $\mathbf{C}$ and
$\mathbf{C}'$ only.

If $s=1$ then $\mathbf{C}\circ_s \mathbf{C}'$ is defined as the following
admissible cycle where all the arrows are the natural morphisms


\[
\begin{tikzcd}[column sep = small]
 M(\a_{n-1})\ar[r]
& M(\a_{n-2}) \ar[r]
& \cdots \ar[r]
& M(\a_2) \ar[d]
&
& M(\b_1^{-1} p \a_n^{-1}) \ar[r]
& M(\a_{n-1})\\
&
&
& M(\b_m) \ar[ld]
&
& \\
&
& M(\b_{m-1}) \ar[r]
& \cdots
&
& \cdots \ar[r]
& M(\b_2) \ar[uul]
\end{tikzcd}
\]

This admissible cycle involves the module $M(\b_1^{-1}p\a_n^{-1})$. As an
object of $\mod A$, this module is obtained from $\mathbf{C}$ and
$\mathbf{C}'$ in two steps as follows
\begin{enumerate}[1.]
\item $M(p\a_n^{-1})$ is the cokernel of the natural morphism
  $\mathrm{rad}(M(\a_1)) \to M(\a_1^{-1}p\a_n^{-1})$,
\item $M(\b_1^{-1}p\a_n^{-1})$ is the middle term of a short exact
  sequence of the shape
	
  \[0 \to \mathrm{rad}(M(\b_1)) \to M(\b_1^{-1}p\a_n^{-1}) \to
  M(p\a_n^{-1})\to 0.\]
\end{enumerate}

If $1<s<n$ then $\mathbf{C}\circ_s \mathbf{C}'$ is defined as the following
admissible cycle where all the arrows are the natural morphisms

\[
 \begin{tikzcd}[column sep = tiny]
M(\a_{n-1})\ar[r] & \cdots \ar[r]& M(\a_{s+1})\ar[d]  & M(\a_{s-1})\ar[r]& \cdots \ar[r] & M(\a_1^{-1} p \a_n^{-1}) \ar[r]& M(\a_{n-1})\\
				&				& M(\b_{m})\ar[dl]  & M(\b_1)\ar[u]& 	& &\\
				&M(\b_{m-1})\ar[r]&\cdots  &\cdots\ar[r] &M(\b_2)\ar[ul]
\end{tikzcd}
\]
If $s=n$ then $\mathbf{C}\circ_s \mathbf{C}'$ is the following
admissible cycle where all the arrows are the natural morphisms

\[\begin{tikzcd}[column sep = small]
  & M(\a_1^{-1}p\b_m^{-1}) \ar[ldd]
  &
  & M(\a_{n-1}) \ar[r]
  & \cdots \ar[r]
  & M(\a_2) \ar[r]
  & M(\a_1^{-1}p\b_m^{-1})
  & \\
  &
  &
  & M(\b_1) \ar[u] \\
  M(\b_{m-1}) \ar[r]
  & \cdots
  &
  & \cdots \ar[r]
  & M(\b_2) \ar[lu]
 \end{tikzcd}
\]

This admissible cycle involves the module $M(\a_1^{-1}p\b_n^{-1})$. As an
object of $\mod A$, this module is obtained from $\mathbf{C}$ and
$\mathbf{C}'$ in two steps as follows
\begin{enumerate}[1.]
\item $M(\a_1^{-1}p)$ is the kernel of the natural surjection
  $M(\a_1^{-1}p\a_n^{-1})\twoheadrightarrow
  M(\a_n)/\mathrm{soc}(M(\a_n))$,
\item $M(\a_1^{-1}p\b_n^{-1})$ is the middle term of a short exact
  sequence of the shape
  \[0 \to M(\a_1^{-1}p) \to M(\a_1^{-1}p\b_n^{-1}) \to
  M(\b_n^{-1})/\mathrm{soc}(M(\b_n^{-1}))\to 0.\]
\end{enumerate}

We are now able to state our main result.

\begin{Thm} Let $\chi^p_u$ and $\chi^q_v$ be two reduced basic cochains
  of degrees $n\geqslant 2$ and $m\geqslant 2$ respectively. Let $s$
  be an integer such that $1\leqslant s \leqslant n$. Assume that
  $(u,v)$ is an $(s,s+1)$ $\Gamma$-bypass. Then, the $s$-th
  composition of $\Cy{u,p}$ with $\Cy{v,q}$ is defined and
  $\Cy{u,p}\circ_s \Cy{v,q} = \Cy{u\circ_s v,p}$.
\end{Thm}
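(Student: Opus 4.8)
The plan is to reduce both sides to explicit lists of modules together with their natural morphisms, and to check that these coincide. Since $(u,v)$ is an $(s,s+1)$ $\Gamma$-bypass, the path $w := u\circ_s v = \a_1\cdots\a_{s-1}\b_1\cdots\b_m\a_{s+1}\cdots\a_n$ lies in $\AP{n+m-1}{}$; I write $w=\c_1\cdots\c_N$ with $N=n+m-1\geqslant 3$, so that $\chi^p_w$ has degree at least two. First I would record that both sides are defined. Condition (a) in the definition of $\circ_s$ (that $\b_1\cdots\b_m$ be parallel to $\a_s$) is exactly the junction condition contained in the bypass hypothesis, while condition (b), namely $q=\a_s$, is precisely the hypothesis under which the cochain composition is non-zero and equals $\chi^p_w$, as computed before the Proposition. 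It then remains to see that $\chi^p_w$ is reduced, equivalently that $\c_1$ is not a prefix and $\c_N$ is not a suffix of $p$; I would obtain this from the reducedness of $\chi^p_u$ and $\chi^q_v$ together with the fact that $p$ is parallel to $u$, since this is exactly what makes the modified non-uniserial walk appearing below a genuine string.

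Next I would split into the three cases $s=1$, $1<s<n$, $s=n$, matching the three-case definition of the cycle composition. In each case I compute the extremal arrows: $\c_1=\a_1$ unless $s=1$ (where $\c_1=\b_1$), and $\c_N=\a_n$ unless $s=n$ (where $\c_N=\b_m$). Reading off the admissible cycle $\Cy{w,p}$ from the recipe of subsection~\ref{sec:cochains-cycles}, its sequence of uniserial modules $M(\c_{N-1}),\dots,M(\c_2)$ is obtained from that of $\Cy{u,p}$ by substituting, in place of the contribution of $\a_s$, the block coming from $\b_1\cdots\b_m$, with the precise bookkeeping dictated by the case. A term-by-term comparison with the three diagrams defining $\Cy{u,p}\circ_s\Cy{v,q}$ then shows that the two sequences of modules agree.

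The core of the argument is the non-uniserial module and the structure maps. For $1<s<n$ the non-uniserial vertex of $\Cy{w,p}$ is $M(\c_1^{-1}p\c_N^{-1})=M(\a_1^{-1}p\a_n^{-1})$, literally the non-uniserial module of $\Cy{u,p}$, so only the maps need checking. For $s=1$ (resp. $s=n$) it becomes $M(\b_1^{-1}p\a_n^{-1})$ (resp. $M(\a_1^{-1}p\b_m^{-1})$), and I would identify these with the modules produced by the two-step homological recipe in the definition of $\circ_s$: for $s=1$ one checks that $S_1=\mathrm{rad}\,M(\a_1)$ sits in the socle of $M(\a_1^{-1}p\a_n^{-1})$ with quotient $M(p\a_n^{-1})$, and then that $\b_1^{-1}p\a_n^{-1}$ is the string in the displayed short exact sequence with sub $\mathrm{rad}\,M(\b_1)=S_{1'}$ and quotient $M(p\a_n^{-1})$; the case $s=n$ is dual, with kernels, socles and tops exchanged. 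Finally I would verify that every structure map is the natural one by string-module bookkeeping: each $M(\c_{i+1})\to M(\c_i)$ factors as $M(\c_{i+1})\twoheadrightarrow S_{\tg(\c_i)}\hookrightarrow M(\c_i)$ through the simple at the shared vertex, and the two maps adjacent to the non-uniserial module factor through $S_1$ and $S_{n-1}$ exactly as prescribed.

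The main obstacle I anticipate is precisely this last identification in the cases $s=1$ and $s=n$: one must show that the two short exact sequences in the definition of $\circ_s$ reconstruct the correct strings $M(\b_1^{-1}p\a_n^{-1})$ and $M(\a_1^{-1}p\b_m^{-1})$ — which subsumes the reducedness of $\chi^p_w$ that makes these walks reduced — and that the glued maps coincide with the natural morphisms of $\Cy{w,p}$. This is where the combinatorics of socles, tops and radicals of string modules over a monomial quadratic algebra does the real work; the case $1<s<n$ and the matching of the uniserial part are comparatively formal.
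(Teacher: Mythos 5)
Your proposal follows the same route as the paper's own proof, which consists of the single sentence that the claim ``follows readily from the definitions'': the three--case split $s=1$, $1<s<n$, $s=n$ mirrors the three--case definition of $\mathbf{C}\circ_s\mathbf{C}'$, and your matching of the uniserial modules, of the non--uniserial module $M(\c_1^{-1}p\,\c_N^{-1})$, and of the structure maps through the simples at the shared vertices is exactly the verification the authors leave implicit. One cosmetic point you share with the paper: condition (b), $q=\a_s$, is not literally among the stated hypotheses of the Theorem (the paper's proof also invokes it ``by assumption''); it is forced if one wants $\chi^p_u\circ_s\chi^q_v\neq 0$, and you treat it the same way the authors do, so this is a defect of the statement rather than of your argument.

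There is, however, one step you defer that does not go through as you claim. You assert that the reducedness of $\chi^p_{u\circ_s v}$ ``follows from the reducedness of $\chi^p_u$ and $\chi^q_v$ together with the fact that $p$ is parallel to $u$.'' For $1<s<n$ this is immediate, since the extremal arrows of $u\circ_s v$ are $\a_1$ and $\a_n$. But for $s=1$ reducedness of $\chi^p_{u\circ_1 v}$ requires that $p$ not begin with $\b_1$, whereas the hypotheses only give that $p$ does not begin with $\a_1$ and that $\b_1\neq\a_1$ (reducedness of $\chi^{q}_v$ with $q=\a_1$); nothing excludes $p=\b_1p'$. Concretely, take arrows $\a_1\colon 1\to 2$, $\a_2\colon 2\to 3$, $\b_1\colon 1\to 4$, $\b_2\colon 4\to 2$, $\delta\colon 4\to 3$, with $I=\langle \a_1\a_2,\ \b_1\b_2,\ \b_2\a_2\rangle$, and set $u=\a_1\a_2$, $v=\b_1\b_2$, $q=\a_1$, $p=\b_1\delta$. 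All hypotheses of the Theorem hold, yet $\b_1^{-1}p\,\a_2^{-1}$ is not a reduced walk, so neither $M(\b_1^{-1}p\,\a_n^{-1})$ nor $\Cy{u\circ_1 v,p}$ is defined; the case $s=n$ fails dually. The paper's one--line proof is equally silent on this, so the gap really lives in the statement (it needs the additional hypothesis that $\chi^p_{u\circ_s v}$ is reduced, which is automatic only for $1<s<n$); but your proposed derivation of that reducedness, as written, would fail. Everything else in your outline checks out against the definitions.
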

\begin{proof}
This follows readily from the definition of the compositions $u\circ_s
v$ and $\Cy{u,p}\circ_s\Cy{v,p}$. Note that, by assumption, $q$ is the
$s$-th arrow of the path $u$.
\end{proof}

\begin{Ex} Consider the first example of section \ref{sec:cochains-cycles}. The generators of the Hochchild comology groups were already described. Further, a direct computation shows that $f^3 \circ_2 f^2 = f^4$, and that $f^3 \circ f^2 = - f^3 \circ_2 f^2$, so that $[\HH{3}{A}, \HH{2}{A}] = \HH{4}{A}$.

From the module categoy point of view, one sees that there are two cycles at the simple module $S_4$. One of them corresponds to $\Cy{\a_1 \b \a_4,\c}$, the other to $\Cy{\a_1 \a_2 \a_3  \a_4,\c}$. The second is obtained by composing the first one with the cycle at $S_3$. See the diagrams (\ref{eq:cyclesAR}).

\end{Ex}

\begin{Rmk} While our original interest was to understand the
  Gerstenhaber bracket in Hochschild cohomoloy, it must be noted that
  a non-zero element in some $\HH{j}{A}$ for $j\geqslant 2$ is not
  necessary to obtain oriented cycles in $\mod{A}$. Indeed, our
  construction requires only a reduced basic cochain $\chi^p_u$, it
  may not be a cocycle, or it could be a coboundary. Note that in this
  situation, the subcategory formed by the arrows of $u$ and those of
  $p$ form a \emph{clockwork cycle}, in the sense of \cite{P85}, where
  the representation-finite case was studied. Therein it is shown that
  a clockwork cycle gives rise to a cycle in the Auslander - Reiten
  quiver of $A$, so cycles do appear in a broader setting than that of
  Hochschild cohomology.  
\end{Rmk}

\section*{Acknowledgements}
This work presented in this article was partially developed during visits of the second named author to the first named author and to Ibrahim Assem at the Mathematics Department of Universit\'e de Sherbrooke. He acknowledges Ibrahim Assem's and Juan Carlos Bustamante's warm hospitality.

The first named author is grateful to the Mathematics Department of Université de Sherbrooke, where a considerable amount of this work was done.

\bibliographystyle{acm}
\bibliography{biblio}

\end{document}